\numberwithin{equation}{section}
\newtheorem{theorem}{Theorem}
\newtheorem{lemma}{Lemma}[section]
\newtheorem{corollary}{Corollary}[section]
\newtheorem{proposition}{Proposition}[section]
\newtheorem{remark}{Remark}[section]
\newtheorem{example}{Example}[section]
\numberwithin{equation}{section}
\renewcommand{\a}{\alpha}
\renewcommand{\b}{\beta}
\newcommand{\La}{\Lambda}
\def\R{{\mathbb{R}}}
\def\N{{\mathbb{N}}}
\def\Z{{\mathbb{Z}}}
\def\F{{\mathcal{F}}}
\begin{document}

\begin{small}
*The first version of the present paper (\cite{S16}) was almost finished 3 years ago, but has not been submitted. In the mean time, Gilbert and Gaspard \cite{GG16}
show how the variational characterization can be put to use to obtain the correction to static (or instantaneous) part of the diffusion coefficient and carried out further molecular dynamics simulations, which on one side confirm our picture and on the other hand also show that the correction is very small.
The present version is a substantial generalization of \cite{S16}.
\end{small}

\bigskip
\bigskip

\title[Energy Diffusion]
{The gradient condition and the contribution of the dynamical part of Green-Kubo formula to the diffusion coefficient}
\author{Makiko Sasada}
\address{Makiko Sasada\\
Graduate School of Mathematical Sciences, The University of Tokyo\\
3-8-1, Komaba, Meguro-ku, Tokyo, 153-8914, Japan}
\email{{\tt sasada@ms.u-tokyo.ac.jp}}

\begin{abstract}
In the diffusive hydrodynamic limit for a symmetric interacting particle system (such as the exclusion process, the zero range process, the stochastic Ginzburg-Landau model, the energy exchange model), a possibly non-linear diffusion equation is derived as the hydrodynamic equation. The bulk diffusion coefficient of the limiting equation is given by Green-Kubo formula and it can be characterized by a variational formula. In the case the system satisfies the gradient condition, the variational problem is explicitly solved and the diffusion coefficient is given from the Green-Kubo formula through a static average only. In other words, the contribution of the dynamical part of Green-Kubo formula is $0$. In this paper, we consider the converse, namely if the contribution of the dynamical part of Green-Kubo formula is $0$, does it imply the system satisfies the gradient condition or not. We show that if the equilibrium measure $\mu$ is product and $L^2$ space of its single site marginal is separable, then the converse also holds.

As an application of the result, we consider a class of stochastic models for energy transport studied by Gaspard and Gilbert in \cite{GG08,GG09}, where the exact problem is discussed for this specific model. 

\end{abstract}

\keywords{gradient condition; diffusion coefficient}

\thanks{ This paper has been partially supported by by the Grant-in-Aid for Young Scientists (B) 25800068.}

\maketitle

\section{Introduction}\label{aba:sec1}
In the study of the hydrodynamic limit for a large scale of interacting particle systems, the system is said to satisfy the gradient condition, if the current of the conserved quantity is given by a linear sum of the difference of a local function and its space-shift. If the system satisfies the gradient condition, the diffusion coefficient of the hydrodynamic equation has an explicit expression and the proof of the scaling limit becomes much simpler than the general case (cf. \cite{KL}). The underlying structure for this simplification is that the gradient condition implies that the contribution of the dynamical part of Green-Kubo formula is $0$. Then, it might be natural to ask whether the converse statement holds or not. Namely, if the contribution of the dynamical part of Green-Kubo formula is $0$, does it imply the system satisfies the gradient condition? Though the question sounds very natural, we could not find any explicit answer in the literature. In this paper, we give the answer under the assumption that the equilibrium measure is a product measure. Our motivation originally comes from the series of papers by Gaspard and Gilbert \cite{GG08,GG09} where the relation of the gradient condition and the contribution of the dynamic part of GK-formula was discussed. In the last section of the paper, we show an application of our result to this model.

The proof of our main result relies on very fundamental observations for non-dynamical problems. More precisely, the key theorem (Theorem \ref{thm:main} below) concerns only about the properties of the equilibrium measure. 

In the next section, we give our general setting and state the main result. In Section \ref{sec:proof}, we give a proof of Theorem \ref{thm:main}. For simplicity we first discuss about the one-dimensional case and then generalize it to the higher dimensional case. In the last section, we explain an application to the model studied by Gaspard and Gilbert in \cite{GG08,GG09}.


\section{Setting and main result}

We consider a general interacting particle system with stochastic dynamics, whose state space is given by a product space $\Omega=X^{\Z^d}$ where $X$, the single component space, is a measurable space. We suppose that $\Omega$ is the product measurable space equipped with a translation invariant probability measure $\mu$ and denote the expectation with respect to $\mu$ by $\langle \cdot \rangle$ and the inner product of $L^2(\mu)$ by $\langle \cdot, \cdot \rangle$. We denote by $(\eta_x)_{x \in \Z^d}$ the element of $\Omega$. A measurable function $f: \Omega \to \R$ is called local if it depends only on a finite number of coordinates, and for a local function $f$, we define $s_f:=\min\{n \ge 0; f \textit{does not depend on } (\eta_x)_{|x| \ge n+1} \}$ where $|x|=\max\{|x_1|,|x_2|,\dots, |x_d|\}$ for $x \in \Z^d$.
Shift operators $\tau_z$ are defined for each $z \in \Z^d$ as $(\tau_z \eta)_x=\eta_{x-z}$ and $(\tau_z f) (\eta)= f(\tau_z \eta)$.
 
Let $\mathcal{D}:=\{f \in L^2(\mu); f : \textit{local} \} $. If an operator $T:\mathcal{D} \to \mathcal{D}$ satisfies that there exists $r \ge 0$ such that $s_{Tf} \le \max\{s_f,r\}$, then we call it a local operator.

We consider a set of local operators $(L_{x,y})_{x,y \in \Z^d}$ satisfies $L_{x,y} \mathbf{1}=0$ and the following conditions with convention $L_{x,x}\equiv 0$:
\begin{itemize}
\item Translation invariance : $L_{x,y}=\tau_{x} L_{0,y-x} \tau_{-x}$
\item Finite range : There exists $R >0$ such that $L_{0,z} \equiv 0$ for $|z| >R$
\item Symmetry : $L_{0,z}=L_{z,0}$ for any $z \in \Z^d$
\item Reversibility : $\langle L_{0,x}f, g \rangle=\langle f, L_{0,x}g \rangle$ for $f,g \in  \mathcal{D}$
\item Non-positivity: $\mathcal{D}_{0,x}(f):=\langle -L_{0,x}f, f \rangle \ge 0$ for $f \in  \mathcal{D}$
\end{itemize}

We suppose that $L=\sum_{x,y \in \Z^d}L_{x,y}$ defines the Markov process $\{\eta_x(t)\}_{x \in \Z^d}$ whose (formal) generator is $L$ with initial distribution $\mu$. We do not attempt here at a justification of this setting in full generality but rather refer to the examples for full rigor. By the reversibility, $\mu$ is the stationary measure for the process.

Our interest is in the case where the conservation quantity exists. Actually, we also suppose that there exists a measurable function $\xi :X \to \R$ such that $\xi(\eta_0) \in L^2(\mu)$ and 
$L_{x,y}(\xi_x+\xi_y)=0 $ and $ L_{x,y} \xi_z=0$ for $z \neq x,y$ where $\xi_x:=\xi(\eta_x)$.

\begin{example}
The exclusion process with a proper jump rate $c$ is in our setting with $X=\{0,1\}$, $L_{x,y}f=\frac{1}{2}c(x,y,\eta)(f(\eta^{x,y})-f(\eta))$, $\mu$: a product Bernoulli measure and $\xi(\eta)=\eta$ where $c(x,y,\eta)=c(y,x,\eta)$ and $\eta^{x,y}$ is the configuration obtained from $\eta$ by exchanging the occupation variables at $x$ and $y$.
\end{example}

\begin{example}
The generalized exclusion process is in our setting with $X=\{0,1,\dots,\kappa\}$, 
\[
L_{x,y}f=\mathbf{1}_{ \{|x-y|=1\}}\frac{1}{2}\big( \mathbf{1}_{\{\eta_x \ge 1, \eta_y \le \kappa-1 \}}(f(\eta^{x \to y})-f(\eta)) +\mathbf{1}_{\{\eta_y \ge 1, \eta_x \le \kappa-1 \}}(f(\eta^{y \to x})-f(\eta)) \big),
\]
$\mu$: a translation invariant Gibbs measure and $\xi(\eta)=\eta$ where $\eta^{x \to y }$ is the configuration obtained from $\eta$ by letting a particle jump from $x$ to $y$ (cf. \cite{KL}).
\end{example}

\begin{example}
The zero-range process with a proper jump rate $g$ is in our setting with $X=\{0,1,2,\dots,\}=\Z_{\ge 0}$, $L_{x,y}f=\mathbf{1}_{\{|x-y|=1\}}\frac{1}{2}(g(\eta_x)(f(\eta^{x \to y})-f(\eta))+g(\eta_y)(f(\eta^{y \to x})-f(\eta)))$, $\mu$: a product Gibbs measure and $\xi(\eta)=\eta$ (cf. \cite{KL}).
\end{example}

\begin{example}
The stochastic Ginzburg-Landau process with proper functions $a$ and $V$ is in our setting with $X=\R$, $L_{x,y}=(\partial_{\eta_x}-\partial_{\eta_y})(a(\eta_x,\eta_y)(\partial_{\eta_x}-\partial_{\eta_y}))+a(\eta_x,\eta_y)(V'(\eta_x)-V'(\eta_y))(\partial_{\eta_x}-\partial_{\eta_y})$, $\mu$: a product Gibbs measure given by the potential $V$ and $\xi(\eta)=\eta$ (cf. \cite{Lu,V}).
\end{example}

\begin{example}
The stochastic energy exchange model is also in our setting as shown in Section \ref{sec:energy}.
\end{example}

Let $\langle \xi_0 \rangle =\rho$, $S(x,t):=\mathbb{E}[\xi_x(t) \xi_0(0)] -\rho^2$ and $\chi:=\sum_{x \in \Z^d}S(x,t)$, which we suppose finite. As studied in \cite{Sp} (Section 2.2 of Part II) for exclusion processes, the bulk diffusion coefficient matrix $D=(D_{\a\b})$ for the conserved quantity $\xi$ is defined as 
\begin{displaymath}
D_{\a\b}:=\lim_{t \to \infty} \frac{1}{t}\frac{1}{2\chi}\sum_{x \in \Z^d}x_{\a}x_{\b}S(x,t),
\end{displaymath}
$\a,\b=1,2,\dots,d$.
 
Under a general condition, we can show the following Green-Kubo formula (cf. \cite{Sp} (Section 2.2 of Part II)):
\begin{equation}\label{GKformula}
D_{\a\b}=\frac{1}{2\chi} \big( 2\sum_{x}x_{\a}x_{\b}\mathcal{D}_{0,x}(\xi_0) - 2\int_0^{\infty} \sum_x \mathbb{E}[ j_{\a} e^{Lt} \tau_x j_{\b}] dt \big)
\end{equation}
where $j_{\a}=\sum_{x \in\Z^d} x_{\a} j_{0,x}$ and $j_{0,x}= 2L_{0,x}\xi_0=(L_{0,x}+L_{x,0})\xi_0$, which is a current between $0$ and $x$. Note that
\[
\xi_x(t)-\int_0^t \sum_{z} j_{x,x+z}(s) ds
\]
is a Martingale and $2\mathcal{D}_{0,x}(\xi_0)=-\langle \xi_0, j_{0,x} \rangle$. We call the term $\frac{1}{2\chi} \sum_{x}x_{\a}x_{\b}\mathcal{D}_{0,x}(\xi_0) $ as the static part of Green-Kubo formula and 
$-\frac{1}{\chi} \int_0^{\infty} \sum_x  \mathbb{E}[j_{\a} e^{tL} \tau_x  j_{\b}] dt$ as the dynamical part of Green-Kubo formula. We define the matrix $D^s$ as
\[
D^s_{\a\b}=\frac{1}{\chi}\sum_{x}x_{\a}x_{\b}\mathcal{D}_{0,x}(\eta_0).
\]

We introduce the Hilbert space $\mathcal{H}$ of functions on $\Omega$ as the completion of $\mathcal{D}$ equipped with the (degenerate) scalar product
\[
\langle f | g \rangle  := \sum_{x \in \Z^d} (\langle \tau_xf g \rangle -  \langle f \rangle \langle g \rangle).
\]
Here, we suppose that the measure $\mu$ satisfies an enough spatial mixing condition to make $\langle f | g \rangle$ be well-defined for any $f, g \in \mathcal{D}$. Actually, in our main theorem, we only consider product measures.
We also suppose that $e^{tL}$ induces the self-adjoint semigroup $T_t$ on $\mathcal{H}$ and denote its generator by $\tilde{L}$. 

The following variational formula also holds under a general condition (cf. \cite{Sp} (Section 2.2 of Part II), \cite{KL}):
\begin{equation}\label{variational formula}
\sum_{\a,\b=1}^d{\ell_{\a}}{\ell_{\b}}D_{\a\b}=\sum_{\a,\b=1}^d{\ell_{\a}}{\ell_{\b}}D^s_{\a\b}+ \inf_{f \in \mathcal{D}} \{ - 2  \langle \sum_{\a=1}^d \ell_{\a} j_{\a} | f \rangle -\langle f | \tilde{L} f \rangle \}
\end{equation}
for all $\ell=(\ell_{\a}) \in \R^d$.

So far, we did not prove anything and just introduce the settings. From now on, under the assumption that relations (\ref{GKformula}) and (\ref{variational formula}) hold, we state our main result. For this, we introduce the gradient space 
\[\mathfrak{G}:=\{\sum_{\a=1}^d (\tau^{\a} g_{\a} -g_{\a}) ;  g_{\a} \in \mathcal{D},\a=1,2,\dots,d \}
\] where $\tau^{\a}=\tau_{e_{\a}}$ and $e_{\a}$ is the unit vector to the $\a$-th direction. The stochastic system defined by $L$ is said to satisfy the gradient condition, if $j_{\a} \in \mathfrak{G}$ for all $\a=1,2,\dots,d$.

Our main result is that our stochastic system satisfies the gradient condition if and only if $D=D^s$ under the condition that $\mu$ is product and the $L^2$ space of its single site marginal is separable. To show this, we first give two simple lemmas.

\begin{lemma}
If the stochastic system defined by $L$ satisfies the gradient condition, then $D=D^s$. Namely, the variational formula (\ref{variational formula}) attains its minimum with $f=0$.
\end{lemma}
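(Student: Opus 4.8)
The plan is to show that the infimum in the variational formula (\ref{variational formula}) equals $0$ and is attained at $f=0$; this is precisely the assertion $D=D^s$. The functional $f\mapsto -2\langle\sum_{\a=1}^d\ell_{\a}j_{\a}\,|\,f\rangle-\langle f\,|\,\tilde{L}f\rangle$ vanishes at $f=0$, so the infimum is $\le 0$ and only the reverse inequality is at issue. Since the generator $\tilde{L}$ of the self-adjoint semigroup $T_t$ on $\mathcal{H}$ is non-positive, $-\langle f\,|\,\tilde{L}f\rangle\ge 0$ for every $f\in\mathcal{D}$; hence it suffices to prove that the linear term vanishes, that is, $\langle\sum_{\a=1}^d\ell_{\a}j_{\a}\,|\,f\rangle=0$ for all $f\in\mathcal{D}$.

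The key observation is purely about the equilibrium measure: every element of the gradient space $\mathfrak{G}$ is orthogonal to all of $\mathcal{D}$ with respect to the degenerate form $\langle\cdot\,|\,\cdot\rangle$. By linearity of $\langle\cdot\,|\,\cdot\rangle$ in its first argument it is enough to take $g=\tau^{\b}h-h$ with $h\in\mathcal{D}$. Using the translation invariance of $\mu$ (so that $\langle\tau^{\b}h\rangle=\langle h\rangle$) and re-indexing the lattice sum by $x\mapsto x+e_{\b}$,
\[
\langle\tau^{\b}h\,|\,f\rangle=\sum_{x\in\Z^d}\big(\langle(\tau_{x+e_{\b}}h)f\rangle-\langle h\rangle\langle f\rangle\big)=\sum_{y\in\Z^d}\big(\langle(\tau_{y}h)f\rangle-\langle h\rangle\langle f\rangle\big)=\langle h\,|\,f\rangle,
\]
whence $\langle g\,|\,f\rangle=\langle\tau^{\b}h\,|\,f\rangle-\langle h\,|\,f\rangle=0$. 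The re-indexing is legitimate because, under the standing spatial mixing assumption on $\mu$ --- and in particular when $\mu$ is a product measure, which is the case in the main theorem --- the series defining $\langle\cdot\,|\,\cdot\rangle$ is absolutely convergent.

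To conclude, note that $\mathfrak{G}$ is a linear subspace of $\mathcal{D}$ and that the gradient condition gives $j_{\a}\in\mathfrak{G}$ for each $\a$; hence $\sum_{\a=1}^d\ell_{\a}j_{\a}\in\mathfrak{G}$ for every $\ell\in\R^d$, and the previous paragraph yields $\langle\sum_{\a=1}^d\ell_{\a}j_{\a}\,|\,f\rangle=0$ for all $f\in\mathcal{D}$. Thus the functional reduces to $-\langle f\,|\,\tilde{L}f\rangle\ge 0$, with minimum $0$ attained at $f=0$, and the variational formula gives $\sum_{\a,\b=1}^d\ell_{\a}\ell_{\b}D_{\a\b}=\sum_{\a,\b=1}^d\ell_{\a}\ell_{\b}D^s_{\a\b}$ for all $\ell\in\R^d$; as $D$ and $D^s$ are symmetric matrices this is equivalent to $D=D^s$. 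The only point requiring care is the manipulation of the (a priori only conditionally convergent) lattice sums in the second paragraph, and this is exactly what the hypothesis on $\mu$ is meant to license; the rest of the argument is formal.
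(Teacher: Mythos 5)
Your proof is correct and follows essentially the same route as the paper: the gradient condition kills the linear term because every element of $\mathfrak{G}$ lies in the kernel of $\langle\cdot\,|\,\cdot\rangle$ (which you justify by the shift-reindexing computation the paper leaves implicit), and non-positivity of $\tilde{L}$ makes the quadratic term non-negative, so the infimum is $0$ at $f=0$. The extra care you take with absolute convergence of the lattice sum is a welcome refinement but does not change the argument.
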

\begin{proof}
If $j_{\a} \in \mathfrak{G}$ for all $\a=1,2,\dots,d$, then $\langle \sum_{\a=1}^d \ell_{\a} j_{\a} | f \rangle=0$. Since $-\langle f | \tilde{L} f \rangle \ge 0$ for any $f \in \mathcal{D}$ by the positivity condition, $\sum_{\a,\b=1}^d{\ell_{\a}}{\ell_{\b}}D_{\a\b}=\sum_{\a,\b=1}^d{\ell_{\a}}{\ell_{\b}}D^s_{\a\b}$ for all $\ell=(\ell_{\a}) \in \R^d$.
\end{proof}

\begin{lemma}
If $D=D^s$, then $\langle j_{\a} | j_{\a} \rangle=0$ for all $\a=1,2,\dots,d$.
\end{lemma}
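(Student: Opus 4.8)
The plan is to read the statement off the variational formula (\ref{variational formula}). Fix $\alpha \in \{1,\dots,d\}$ and apply (\ref{variational formula}) with $\ell$ the $\alpha$-th unit vector $e_\alpha$: the linear form $\sum_\beta \ell_\beta j_\beta$ then reduces to $j_\alpha$ and the left-hand side to $D_{\alpha\alpha}$, so
\[
D_{\alpha\alpha} = D^s_{\alpha\alpha} + \inf_{f \in \mathcal{D}} \{ -2 \langle j_\alpha | f \rangle - \langle f | \tilde{L} f \rangle \}.
\]
By hypothesis $D = D^s$, hence this infimum equals $0$. Since the test function $f = 0$ already realizes the value $0$, saying the infimum is $0$ is equivalent to the one-sided bound
\[
-2 \langle j_\alpha | f \rangle - \langle f | \tilde{L} f \rangle \ge 0 \qquad \text{for every } f \in \mathcal{D}.
\]

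The next step is to note that $j_\alpha$ is itself an admissible competitor. Indeed $j_\alpha = \sum_{x} x_\alpha\, 2 L_{0,x}\xi_0$, and by the finite-range condition $L_{0,x} \equiv 0$ for $|x| > R$, so this is a finite sum of local functions and therefore lies in $\mathcal{D}$; moreover $\langle \cdot | \cdot \rangle$ is well-defined on $\mathcal{D}$ under the standing mixing hypothesis (for the product measures of the main theorem this is immediate, cross terms of disjoint support vanishing). Substituting $f = t\, j_\alpha$ with $t \in \R$ into the displayed inequality gives
\[
-2t\, \langle j_\alpha | j_\alpha \rangle - t^2\, \langle j_\alpha | \tilde{L} j_\alpha \rangle \ge 0 \qquad \text{for all } t \in \R.
\]

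Finally I would invoke the non-positivity of $\tilde{L}$ on $\mathcal{H}$, i.e.\ $-\langle j_\alpha | \tilde{L} j_\alpha \rangle \ge 0$: the left-hand side above is then a quadratic polynomial in $t$ with non-negative leading coefficient that remains non-negative on all of $\R$, which is possible only if its linear coefficient vanishes. Hence $\langle j_\alpha | j_\alpha \rangle = 0$, and since $\alpha$ was arbitrary this is the claim. I do not expect a genuine obstacle here; the only points deserving a line of justification are the passage from ``the infimum is $0$'' to the pointwise inequality (immediate, because $0$ is attained at $f=0$) and the verification that $j_\alpha \in \mathcal{D}$, so that it may be used as a test function in the variational principle.
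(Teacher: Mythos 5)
Your proposal is correct and is essentially the paper's own argument: plug $\ell=e_\alpha$ into the variational formula, note $j_\alpha\in\mathcal{D}$ by finite range, test with $f=c\,j_\alpha$, and use $-\langle j_\alpha|\tilde{L}j_\alpha\rangle\ge 0$ to force the linear coefficient of the resulting quadratic to vanish. The only difference is that you spell out the final quadratic-polynomial step, which the paper leaves implicit.
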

\begin{proof}
If $D=D^s$, we obtain from (\ref{variational formula}) with $\ell_{\a}=1,\ell_{\b}=0$ for $\b \neq \a$, that
\begin{equation}
\inf_{f \in \mathcal{D}} \{ - 2  \langle   j_{\a} | f \rangle -\langle f | \tilde{L} f \rangle \}=0.
\end{equation}
Since $j_{\a}=2\sum_{x}x_{\a} L_{0,x}\xi_0 \in \mathcal{D}$, we can take $c j_{\a}$ as $f$ in the above variational formula for any $c \in \R$ and obtain
\begin{equation*}
\inf_{c \in \R} \{ - 2c  \langle   j_{\a} | j_{\a} \rangle -c^2 \langle j_{\a} | \tilde{L} j_{\a} \rangle \} \ge 0
\end{equation*}
which implies $\langle   j_{\a} | j_{\a} \rangle=0$.

\end{proof}

\begin{remark}
If the interaction of our system is nearest-neighbour, namely, $R=1$, then we have $j_{\a}=j_{0,e_{\a}}-j_{0,-e_{\a}}=j_{0,e_{\a}}+\tau_{-e_{\a}}j_{0,e_{\a}}$ since $j_{0,-e_{\a}}=2L_{0,-e_{\a}}\xi_0=-2L_{0,-e_{\a}}\xi_{-e_{\a}}=-2L_{-e_{\a},0}\xi_{-e_{\a}}=-\tau_{-e_{\a}}j_{0,e_{\a}}$. For this case, $\langle   j_{\a} | j_{\a} \rangle=0$ is equivalent to $\langle   j_{0,e_\a} | j_{0,e_\a} \rangle=0$.
\end{remark}

Next theorem is the most essential result and we give its proof in the next section.

\begin{theorem}\label{thm:main}
Assume that $\mu$ is product with a single site marginal $\nu$, namely $\mu=\nu^{\Z^d}$, and $L^2(\nu)$ is separable. Then, if $f \in \mathcal{D}$ satisfies $\langle f \rangle=0$ and $\langle f | f \rangle=0$, then $f \in \mathfrak{G}$. Equivalently, the intersection of the kernel of $\langle \cdot | \cdot \rangle$ and $\mathcal{D}$ is the direct sum of the space of constant functions and $\mathfrak{G}$. 
\end{theorem}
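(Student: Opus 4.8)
The plan is to characterize the bilinear form $\langle \cdot \mid \cdot \rangle$ on local functions in terms of Fourier analysis on the torus, reducing the statement $\langle f \mid f \rangle = 0$ to a concrete algebraic condition that forces $f$ to be a sum of gradients. Since $\mu = \nu^{\Z^d}$ is product, for local $f, g$ with $\langle f \rangle = \langle g \rangle = 0$ one has $\langle \tau_x f\, g \rangle = 0$ for all but finitely many $x$, so $\langle f \mid g \rangle = \sum_{x \in \Z^d} \langle \tau_x f\, g \rangle$ is a genuine finite sum. I would introduce, for $\theta \in \T^d$, the formal ``Fourier–transformed'' function $\hat f(\theta) = \sum_{x} e^{2\pi i \theta \cdot x}\, \tau_x f$ (a finite sum of shifts up to the cut-off $s_f$), and observe that $\langle f \mid g \rangle$ is exactly the value at $\theta = 0$ of $\langle \hat f(\theta), \overline{\hat g(\theta)} \rangle$; more usefully, by translation invariance of $\mu$ one gets $\langle f \mid g \rangle = \int_{\T^d} \langle \hat f(\theta), \overline{\hat g(\theta)} \rangle \, d\theta$ once one symmetrizes correctly, so that $\langle f \mid f \rangle = 0$ is equivalent to $\hat f(\theta) = 0$ in $L^2(\mu)$ for (a.e., hence every, by continuity in $\theta$) $\theta$. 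The point of passing to the torus is that $\hat f(0) = \sum_x \tau_x f$, and the condition $\hat f(\theta) \equiv 0$ will be shown to mean precisely that the ``polynomial'' $\sum_x e^{2\pi i \theta \cdot x} (\tau_x f)$ vanishes identically, which by the factor theorem for several-variable Laurent polynomials lets us divide by $(e^{2\pi i \theta_\alpha} - 1)$ and read off the decomposition $f = \sum_\alpha (\tau^\alpha g_\alpha - g_\alpha)$.

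First I would treat $d = 1$, following the paper's own suggested order. Here $\hat f(\theta) = \sum_{n} e^{2\pi i n \theta} \tau_n f$ is, after relabeling, a Laurent polynomial in $z = e^{2\pi i \theta}$ with coefficients in $L^2(\nu)^{\otimes(\text{finite})}$, i.e. an element of $V[z, z^{-1}]$ where $V$ is the relevant finite tensor power of $L^2(\nu)$. The separability of $L^2(\nu)$ is used to make sure the relevant tensor-power spaces are well-behaved Hilbert spaces (so that ``$\hat f(\theta) = 0$ in $L^2(\mu)$ for every $\theta$'' really does force every coefficient, read through an orthonormal basis, to be a Laurent polynomial in $z$ vanishing on $|z| = 1$, hence the zero polynomial). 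Once $\hat f \equiv 0$ as a polynomial identity, the constant term condition $\langle f \rangle = 0$ is automatically compatible, and I would divide: write $\hat f(z) = (z - 1) \hat g(z)$ in $V[z, z^{-1}]$, which is legitimate because $\hat f(1) = \hat f(\theta = 0) = 0$; unwinding the definition of $\hat g$ back into a local function $g$ then gives $f = \tau g - g \in \mathfrak G$. One must check that $\hat g$ has coefficients lying in the correct subspace to come from an honest local function $g \in \mathcal D$ — this is a matter of tracking supports, since division by $(z-1)$ can only spread the support of the coefficients by the degree of the polynomial, which is bounded by $s_f + $ (a constant).

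For general $d$ I would iterate: having $\hat f(\theta_1, \dots, \theta_d) \equiv 0$, freeze $\theta_2, \dots, \theta_d$ and regard $\hat f$ as a Laurent polynomial in $z_1 = e^{2\pi i \theta_1}$ with coefficients depending polynomially on $z_2, \dots, z_d$; divide out $(z_1 - 1)$ to extract $g_1$, forming $f - (\tau^1 g_1 - g_1)$, whose Fourier transform no longer involves $z_1$ in an essential way (it becomes independent of $\theta_1$, i.e. the corresponding Laurent polynomial has only the constant term in $z_1$), and recurse on the remaining $d-1$ variables. Keeping careful track of how the support and mean-zero conditions propagate through each division is the only bookkeeping subtlety; the substance is the single-variable factor theorem applied coefficient-wise in a separable Hilbert space.

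I expect the main obstacle to be the rigorous justification of the identity $\langle f \mid f \rangle = 0 \iff \hat f(\theta) = 0$ for all $\theta$: one has to be careful that $\langle \cdot \mid \cdot \rangle$ is a priori only a degenerate (positive semidefinite) form defined by a conditionally convergent-looking sum, and turning it into an honest $L^2(\T^d; L^2(\mu))$ Plancherel statement requires that the ``cross terms'' $\langle \tau_x f\, f\rangle$ really do form an absolutely summable (indeed finitely supported) sequence — which is exactly where the product structure of $\mu$ is essential, and it is also where separability of $L^2(\nu)$ enters to guarantee the finite tensor powers are separable Hilbert spaces on which the coefficient-wise factor theorem argument goes through. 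The algebraic division step, by contrast, is essentially formal once the analytic setup is in place.
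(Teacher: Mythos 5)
Your overall strategy --- expand $f$ over a product orthonormal basis of $L^2(\mu)$, organize the coefficients along shift-orbits into finitely supported sequences, encode each as a Laurent polynomial, and obtain the gradient decomposition by dividing out a root --- is exactly the skeleton of the paper's proof (the division step is the telescoping in Lemmas \ref{lem:grad} and \ref{lem:gradhigh}). But the analytic identity you put at the centre of the argument is wrong, and wrong in a way that matters. First, $\hat f(\theta)=\sum_{x}e^{2\pi i\theta\cdot x}\tau_x f$ is not ``a finite sum of shifts'': the translates $\tau_x f$ are nonzero for every $x$, so this series does not converge in $L^2(\mu)$; the Fourier transform must be applied not to $f$ itself but to the finitely supported coefficient sequences $a^{(\mathbf{n}_*)}_x=\tilde f_{\tau_x\mathbf{n}_*}$ along each shift-orbit $\mathbf{n}_*$ of basis vectors. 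Second, and more seriously, the two characterizations you offer of $\langle f\,|\,f\rangle$ are incompatible, and the one you actually use is false. The correct statement (the paper's Lemma \ref{lem:norm}) is
\[
\langle f\,|\,f\rangle=\sum_{\mathbf{n}_*\in\Theta_*}\Bigl(\sum_{x\in\Z^d}\tilde f_{\tau_x\mathbf{n}_*}\Bigr)^2=\sum_{\mathbf{n}_*}\bigl|\hat a^{(\mathbf{n}_*)}(\mathbf{1})\bigr|^2,
\]
i.e.\ only the value of each coefficient polynomial at $z=\mathbf{1}$ (equivalently $\theta=0$) enters. By contrast, $\int_{\T^d}\sum_{\mathbf{n}_*}|\hat a^{(\mathbf{n}_*)}(\theta)|^2\,d\theta=\sum_{\mathbf{n}_*}\sum_x|\tilde f_{\tau_x\mathbf{n}_*}|^2=\langle f^2\rangle$ by Parseval, which is the plain $L^2$ norm, not $\langle f\,|\,f\rangle$. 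Hence the conclusion you draw --- that the polynomials vanish for \emph{every} $\theta$ --- is both unjustified and far too strong: identical vanishing of all coefficient polynomials would force $f=0$, whereas $\|f\|=0$ only gives vanishing at $z=\mathbf{1}$. Fortunately that single root is exactly what the factor-theorem step needs: a Laurent polynomial vanishing at $(1,\dots,1)$ can be written as $\sum_\alpha(z_\alpha-1)q_\alpha(z)$, which unwinds to the telescoping sums producing the $g_\alpha$. So the fix is to replace your Plancherel identity by the orbit-wise computation above and divide only at $z=\mathbf{1}$.

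A further point you pass over: after producing a primitive $g_{\mathbf{n}_*}$ for each of the infinitely many orbits, you must sum them and show the result is still in $\mathcal{D}$. This requires a bound on $\langle g_{\mathbf{n}_*}^2\rangle$ uniform over orbits --- the paper gets $\langle g_{\mathbf{n}_*}^2\rangle\le(2s_f+1)^2\sum_x\tilde f_{\tau_x\mathbf{n}_*}^2$ from the orthogonality of $\{\tau_x\phi_{\mathbf{n}_*}\}_x$, which sums to $(2s_f+1)^2\langle f^2\rangle$ --- together with the observation that only orbits supported in a window of radius $2s_f$ contribute (Lemma \ref{lem:local}), which gives locality of the sum. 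Your remark that division ``spreads the support by a bounded amount'' addresses the locality of each individual $g_{\mathbf{n}_*}$, but not the convergence and locality of $g=\sum_{\mathbf{n}_*}g_{\mathbf{n}_*}$.
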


Combining this theorem with the above lemmas, we obtain our main result as a straightforward corollary.
\begin{corollary}
Assume that $\mu$ is product with a single site marginal $\nu$, namely $\mu=\nu^{\Z^d}$, and $L^2(\nu)$ is separable. Then, the stochastic system defined by $L$ satisfies the gradient condition, if and only if $D=D^s$. Moreover, if $R=1$, then $D=D^s$ if and only if $j_{0,e_\a} \in \mathfrak{G}$ for all $\a=1,2,\dots,d$.
\end{corollary}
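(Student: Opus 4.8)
The plan is to read off the corollary as a direct synthesis of the two preliminary lemmas and Theorem~\ref{thm:main}, so the work is purely organizational: I chain the implications and check in each case that the hypotheses of Theorem~\ref{thm:main} are met. For the first equivalence, the forward implication (gradient condition $\Rightarrow D=D^s$) is exactly the first lemma, so nothing further is needed there. For the converse I would start from $D=D^s$ and apply the second lemma to obtain $\langle j_\a | j_\a \rangle = 0$ for every $\a$. To invoke Theorem~\ref{thm:main} on $f=j_\a$ I must verify its two hypotheses, locality and vanishing mean. Locality is immediate since $j_\a = 2\sum_x x_\a L_{0,x}\xi_0 \in \D$; and the mean vanishes because, by reversibility and $L_{0,x}\mathbf{1}=0$, one has $\langle j_{0,x}\rangle = 2\langle L_{0,x}\xi_0,\mathbf{1}\rangle = 2\langle \xi_0, L_{0,x}\mathbf{1}\rangle = 0$, whence $\langle j_\a\rangle=0$. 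With $\langle j_\a\rangle=0$ and $\langle j_\a | j_\a\rangle=0$ in hand, Theorem~\ref{thm:main} yields $j_\a \in \mathfrak{G}$ for all $\a$, which is precisely the gradient condition. This closes the first equivalence.

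For the nearest-neighbour refinement ($R=1$) I would use the decomposition $j_\a = j_{0,e_\a}+\t_{-e_\a}j_{0,e_\a}$ recorded in the remark, together with the elementary fact that $\mathfrak{G}$ is a shift-invariant subspace: if $h=\sum_\b(\t^\b g_\b - g_\b)\in\mathfrak{G}$ then $\t_z h = \sum_\b(\t^\b(\t_z g_\b)-\t_z g_\b)\in\mathfrak{G}$ since shifts commute. The converse direction of the refined statement is then immediate: if $j_{0,e_\a}\in\mathfrak{G}$ for all $\a$, shift-invariance gives $\t_{-e_\a}j_{0,e_\a}\in\mathfrak{G}$, hence $j_\a\in\mathfrak{G}$, so the gradient condition holds and $D=D^s$ by the first lemma. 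For the forward direction I would again funnel through Theorem~\ref{thm:main}: starting from $D=D^s$, the second lemma gives $\langle j_\a|j_\a\rangle=0$, the remark converts this to $\langle j_{0,e_\a}|j_{0,e_\a}\rangle=0$, and since $j_{0,e_\a}=2L_{0,e_\a}\xi_0\in\D$ has vanishing mean by the same reversibility computation, Theorem~\ref{thm:main} applied to $f=j_{0,e_\a}$ delivers $j_{0,e_\a}\in\mathfrak{G}$.

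Since all the analytic substance resides in Theorem~\ref{thm:main} and the two lemmas, which I may assume, there is no genuine obstacle here; the proof is exactly the bookkeeping above. The only points requiring a word of care are the verification that the current has mean zero (so that the mean-zero hypothesis of Theorem~\ref{thm:main} is available) and, in the $R=1$ case, the shift-invariance of $\mathfrak{G}$ used to pass from $j_{0,e_\a}$ back to $j_\a$; both are one-line checks. I would present the corollary simply as a short paragraph assembling these pieces.
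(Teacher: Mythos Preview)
Your proposal is correct and matches the paper's approach: the paper states the corollary as a ``straightforward'' combination of the two lemmas and Theorem~\ref{thm:main}, and you have filled in exactly the bookkeeping it omits, including the verification that $\langle j_\a\rangle=0$ via reversibility and $L_{0,x}\mathbf{1}=0$, and the shift-invariance of $\mathfrak{G}$ needed for the $R=1$ refinement.
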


\begin{remark}
Separability condition for $L^2(\nu)$ is quite mild. In particular, if $\nu$ is a probability measure on the measurable space $(E, \mathbb{B}(E))$ where $E$ is a Borel set of Euclidean space and $\mathbb{B}(E)$ is the Borel set on $E$, then $L^2(\nu)$ is separable. Also, if $X$ is a countable set, then $L^2(\nu)$ is separable.
\end{remark}

\begin{remark}
Theorem \ref{thm:main} is nothing to do with the dynamics, but only concerns the probability measure $\mu$.
\end{remark}
\section{Proof of Theorem \ref{thm:main}} \label{sec:proof} 

In the first subsection, we give the proof for the case $d=1$. In the second subsection, we generalize it to the case $d\ge 2$.

\subsection{The one dimensional setting}

 
We consider the case $d=1$. Let $(X, \mathcal{F},\nu)$ be a probability space where $L^2(\nu)$ is separable and $\Omega:=X^{\Z}$ be the infinite product probability space equipped with the probability measure $\mu:=\nu^{\Z}$. Let $\mathcal{D}_0:=\{f \in L^2(\mu); f : \textit{local}, \langle f\rangle =0 \} $. For $f \in \mathcal{D}_0$, we define a semi-norm $\| \cdot \|$ as
\begin{displaymath}
\| f \|^2:=\lim_{k \to \infty}\frac{1}{2k+1} \langle (\sum_{x=-k}^k\tau_xf)^2\rangle =\sum_{x \in \Z} \langle f \tau_x f\rangle = \langle f | f\rangle .
\end{displaymath}

Theorem \ref{thm:main} concerns the relation between the gradient space $\mathfrak{G}:=\{\tau g -g ;  g \in \mathcal{D}\}=\{\tau g -g ;  g \in \mathcal{D}_0\}$ and the kernel of the semi-norm $C_0:=\{f \in \mathcal{D}_0; \|f \|=0 \}$.

It is easy to see that $\mathfrak{G} \subset C_0$. Theorem \ref{thm:main} claims that $\mathfrak{G} \supset C_0$ hence $\mathfrak{G} = C_0$.

\smallskip

To prove this, we first start with a simple lemma. 
Let 
$\ell^2_c:=\{\mathbf{a}=(a_x)_{x \in \Z} \in \R^{\Z}; |\{x \in \Z; a_x \neq 0\}| < \infty \}$. Here $|A|$ represents the number of elements for a set $A$. For $\mathbf{a} \in \ell^c_2$ satisfying $\mathbf{a} \nequiv \mathbf{0}$, define $M_{\mathbf{a}}:=\max\{ x \in \Z; a_x \neq 0\}$ and $m_{\mathbf{a}}:=\min\{ x \in \Z; a_x \neq 0\}$. As a convention, take $M_{\mathbf{0}}=m_{\mathbf{0}}
=0$. We also define $\Lambda_{\mathbf{a}}:=\{ x \in \Z; m_{\mathbf{a}} \le x \le M_{\mathbf{a}}\}$ and $s_{\mathbf{a}}:= |\Lambda_{\mathbf{a}}|$.
 
\begin{lemma}\label{lem:grad}
Let $f \in \mathcal{D}_0$ and assume that there exists $(a_x)_{x \in \Z} \in \ell^2_c$ and $h \in \mathcal{D}_0$ satisfying $f=\sum_{x} a_x \tau_x h$ and $\sum_x a_x=0$. Then, there exists a unique function $g \in \mathcal{D}_0$ such that $f=\tau g -g $, hence $f \in \mathfrak{G}$. Moreover, if $\{ \tau_x h\}_{x \in \Z}$ are orthogonal in $L^2(\mu)$, then $\langle g^2\rangle  \le s_{\mathbf{a}}^2 \sum_{x \in \Z}a_x^2 \langle h^2\rangle $.
\end{lemma}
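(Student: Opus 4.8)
The plan is to reduce the general statement $f=\sum_x a_x\tau_x h$ with $\sum_x a_x=0$ to a telescoping identity. The key algebraic observation is that the constraint $\sum_x a_x = 0$ means the sequence $\mathbf{a}$ can be written as a finite combination of ``elementary differences'': setting $b_x := \sum_{y \le x} a_y$, one gets a finitely supported sequence $\mathbf{b}=(b_x)$ with $a_x = b_x - b_{x-1}$, and moreover $b_x = 0$ for $x < m_{\mathbf{a}}$ and for $x \ge M_{\mathbf{a}}$ (the latter because $\sum_x a_x=0$). Then I would compute formally
\[
f=\sum_x a_x \tau_x h = \sum_x (b_x - b_{x-1})\tau_x h = \sum_x b_x \tau_x h - \sum_x b_{x-1}\tau_x h = \sum_x b_x \tau_x h - \tau\Big(\sum_x b_x \tau_x h\Big),
\]
so that with $g := -\sum_x b_x \tau_x h$ we have $f = \tau g - g$. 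Since $\mathbf{b}$ is finitely supported and $h$ is local with $\langle h\rangle = 0$, the function $g$ lies in $\mathcal{D}_0$. This gives existence.

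For uniqueness: if $\tau g - g = \tau g' - g'$ then $\tau(g-g')=g-g'$, so $w:=g-g'$ is shift-invariant; but $w$ is local, hence depends on finitely many coordinates, and shift-invariance of a local function on $X^{\Z}$ forces $w$ to be constant (it cannot depend on any coordinate), and $\langle w\rangle=0$ forces $w=0$. One should be slightly careful here: shift-invariance as an element of $L^2(\mu)$ together with locality means $w = \tau_z w$ $\mu$-a.s.\ for all $z$, and since $w$ depends on coordinates in a fixed finite window while $\tau_z w$ depends on a disjoint window for large $z$, independence of the coordinates under the product measure $\mu$ forces $w$ to be a.s.\ constant. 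This is the one place where the product structure of $\mu$ enters.

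For the quantitative bound, assume $\{\tau_x h\}_{x\in\Z}$ orthogonal in $L^2(\mu)$. Then
\[
\langle g^2\rangle = \Big\langle \Big(\sum_x b_x \tau_x h\Big)^2\Big\rangle = \sum_x b_x^2 \langle (\tau_x h)^2\rangle = \Big(\sum_x b_x^2\Big)\langle h^2\rangle,
\]
so it remains to bound $\sum_x b_x^2$. The sum $\sum_x b_x^2$ has at most $s_{\mathbf{a}}$ nonzero terms (indices in $\Lambda_{\mathbf{a}}$, roughly), and each $b_x = \sum_{y\le x} a_y = \sum_{y \in \Lambda_{\mathbf a}, y \le x} a_y$ satisfies $b_x^2 \le s_{\mathbf{a}} \sum_y a_y^2$ by Cauchy--Schwarz. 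Hence $\sum_x b_x^2 \le s_{\mathbf{a}}\cdot s_{\mathbf{a}}\sum_y a_y^2 = s_{\mathbf{a}}^2 \sum_x a_x^2$, which yields the claimed estimate. I expect the only genuinely delicate point to be the uniqueness/shift-invariance argument needing the product (or at least ergodicity) hypothesis on $\mu$; the existence and the norm bound are essentially bookkeeping with the partial-sum sequence $\mathbf{b}$.
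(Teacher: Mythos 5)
Your proof is correct and follows essentially the same route as the paper: your partial sums $b_x=\sum_{y\le x}a_y$ satisfy $b_{x-1}=-\sum_{y=x}^{M_{\mathbf a}}a_y$ (using $\sum_y a_y=0$), so your $g=-\sum_x b_x\tau_x h$ is exactly the paper's $g=\sum_x(\sum_{y=x}^{M_{\mathbf a}}a_y)\tau_{x-1}h$, and the uniqueness and Cauchy--Schwarz steps coincide as well. Your extra remark on why a local, shift-invariant function must be a.s.\ constant under the product measure is a worthwhile clarification of a point the paper leaves implicit.
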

\begin{proof}
Uniqueness: If $f=\tau g_1 -g_1=\tau g_2 -g_2 $ and $g_1,g_2 \in \mathcal{D}_0$, then $\tau(g_1-g_2) =g_1-g_2$ and $g_1-g_2 \in \mathcal{D}_0$. In particular, $g_1-g_2$ is local and shift invariant, so it must be a constant. Also, $\langle g_1-g_2\rangle =0$, hence $g_1 \equiv g_2$.

Existence: Since $\sum_{x \in \Z} a_x= \sum_{x=m_{\mathbf{a}}}^{M_{\mathbf{a}}}a_x=0$, we have
\begin{align*}
& f=\sum_{x} a_x \tau_x h=\sum_{x=m_{\mathbf{a}}}^{M_{\mathbf{a}}}a_x \tau_x h \\
&=a_{M_{\mathbf{a}}} (\tau_{M_{\mathbf{a}}} h - \tau_{M_{\mathbf{a}}-1} h)+ (a_{M_{\mathbf{a}}} + a_{M_{\mathbf{a}}-1}) (\tau_{M_{\mathbf{a}}-1} h - \tau_{M_{\mathbf{a}}-2} h) + \dots \\
& +(a_{M_{\mathbf{a}}} + a_{M_{\mathbf{a}}-1}\dots +  a_{m_{\mathbf{a}}})  (\tau_{m_{\mathbf{a}}} h - \tau_{m_{\mathbf{a}}-1} h)  \\
& = \sum_{x=m_{\mathbf{a}}}^{M_{\mathbf{a}}} (\sum_{y=x}^{M_{\mathbf{a}}}  a_y) (\tau_x h -\tau_{x-1}h) = \tau ( \sum_{x=m_{\mathbf{a}}}^{M_{\mathbf{a}}} (\sum_{y=x}^{M_{\mathbf{a}}}  a_y) \tau_{x-1} h) - \sum_{x=m_{\mathbf{a}}}^{M_{\mathbf{a}}} (\sum_{y=x}^{M_{\mathbf{a}}}  a_y) \tau_{x-1} h.
\end{align*}
Therefore, $f=\tau g -g $ with $g=\sum_{x=m_{\mathbf{a}}}^{M_{\mathbf{a}}} (\sum_{y=x}^{M_{\mathbf{a}}}  a_y) \tau_{x-1} h \in \mathcal{D}_0$.

Moreover, if $\{ \tau_x h\}_{x \in \Z}$ are orthogonal in $L^2(\mu)$, 
\begin{align*}
\langle g^2\rangle  & = \langle (\sum_{x=m_{\mathbf{a}}}^{M_{\mathbf{a}}} (\sum_{y=x}^{M_{\mathbf{a}}}  a_y) \tau_{x-1} h)^2 \rangle   = \sum_{x=m_{\mathbf{a}}}^{M_{\mathbf{a}}} (\sum_{y=x}^{M_{\mathbf{a}}}  a_y)^2 \langle h^2\rangle   \\
& \le \sum_{x=m_{\mathbf{a}}}^{M_{\mathbf{a}}} s_{\mathbf{a}} \sum_{y=x}^{M_{\mathbf{a}}}  a_y^2 \langle h^2\rangle   \le s_{\mathbf{a}}^2 \sum_{x=m_{\mathbf{a}}}^{M_{\mathbf{a}}} a_x^2 \langle h^2\rangle  
\end{align*}

\end{proof}

Now, we consider a generalized Fourier series in the space $L^2(\mu)$.  
Let $\N_0:=\{0,1,2,\dots\}$ and $\{ \phi_{n} \}_{n \in \N_0}$ be a countable orthonormal basis of $L^2(\nu)$ satisfying $\phi_0 \equiv 1$. The existence of the countable orthonormal basis follows from the assumption that $L^2(\nu)$ is separable. Let us introduce the multi-index space $\Theta:=\{ \mathbf{n}=(n_x)_{x \in \Z} \in \N_0^{\Z};  |\{x \in \Z; n_x \neq 0\}| < \infty \}$. Then, the set of functions $\{ \phi_{\mathbf{n}}\}_{\mathbf{n} \in \Theta}$ is the countable orthonormal basis of $L^2(\mu)$ where $\phi_{\mathbf{n}}(\eta):=\Pi_{x \in \Z} \phi_{n_x}(\eta_x)$. In particular, if $f \in L^2(\mu)$, then $f=\sum_{\mathbf{n} \in \Theta } \tilde{f}_{\mathbf{n}}\phi_{\mathbf{n}}$ with $\tilde{f}_{\mathbf{n}}=\langle  f \phi_{\mathbf{n}} \rangle $. 

We define the shift operator $(\tau_z \mathbf{n})_x = n_{x-z}$ and $\Theta_*:=\{ \mathbf{n}=(n_x)_{x \in \Z} \in \Theta;  n_x=0 \ (\forall x <0) \ , n_0 \neq 0 \}$. Then, for any $\mathbf{n} \in \Theta \setminus \{\mathbf{0}\}$, there exists a unique pair $(x, \mathbf{n}_*) \in \Z \times \Theta_*$ such that $\mathbf{n}=\tau_x \mathbf{n}_*$.

The next lemma is about the locality of the Fourier series.

\begin{lemma}\label{lem:local}
For amy $f \in \mathcal{D}_0$ and $\mathbf{n}_* \in \Theta_*$, $\tilde{f}_{\tau_x \mathbf{n}_*}=0$ if $|x| \ge s_f+1$. In particular, $(\tilde{f}_{\tau_x \mathbf{n}_*})_{x \in \Z} \in \ell^2_c$.

Moreover, for $\mathbf{n}_* \in \Theta_*$ satisfying ${n_*}_y \neq 0$ with some $|y| \ge 2s_f+1$, $\tilde{f}_{\tau_x \mathbf{n}_*}=0$ for all $x \in \Z$. 
\end{lemma}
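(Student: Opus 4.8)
The plan is to argue entirely at the level of the generalized Fourier coefficients $\tilde f_{\mathbf n}=\langle f\phi_{\mathbf n}\rangle$, using only the product structure $\mu=\nu^{\Z}$ and the normalization $\phi_0\equiv 1$. First I would record the elementary fact that orthonormality together with $\phi_0\equiv 1$ gives $\langle\phi_n\rangle_\nu=\langle\phi_n\phi_0\rangle_\nu=0$ for every $n\neq 0$. The single observation driving the whole lemma is then: if $f\in\mathcal D_0$ and $\mathbf n\in\Theta$ has some coordinate $n_z\neq 0$ with $|z|\geq s_f+1$, then $\tilde f_{\mathbf n}=0$. To see this, note that $f$ does not depend on $\eta_z$ while $\phi_{\mathbf n}(\eta)=\phi_{n_z}(\eta_z)\prod_{w\neq z}\phi_{n_w}(\eta_w)$; since $\mu$ is a product measure one integrates the $\eta_z$ variable out first, producing the factor $\langle\phi_{n_z}\rangle_\nu=0$. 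Equivalently, $\tilde f_{\mathbf n}\neq 0$ forces the support $\{z:n_z\neq 0\}$ of $\mathbf n$ to be contained in $\{-s_f,\dots,s_f\}$.

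Given this, the first assertion is immediate. Since $\mathbf n_*\in\Theta_*$ has $(\mathbf n_*)_0\neq 0$, the shifted index $\tau_x\mathbf n_*$ has a nonzero coordinate at position $x$; so if $|x|\geq s_f+1$ the observation yields $\tilde f_{\tau_x\mathbf n_*}=0$. Hence the nonzero entries of the sequence $(\tilde f_{\tau_x\mathbf n_*})_{x\in\Z}$ all lie in $\{-s_f,\dots,s_f\}$, and in particular the sequence belongs to $\ell^2_c$.

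For the ``moreover'' part I would set $M_*:=\max\{z:(\mathbf n_*)_z\neq 0\}$. Because $\mathbf n_*$ is supported on $\N_0$ and has a nonzero coordinate at some $y$ with $|y|\geq 2s_f+1$, we get $M_*\geq 2s_f+1$, while also $0$ is in the support of $\mathbf n_*$. Fix $x\in\Z$ and suppose $\tilde f_{\tau_x\mathbf n_*}\neq 0$; then by the observation above every coordinate at which $\tau_x\mathbf n_*$ is nonzero lies in $\{-s_f,\dots,s_f\}$, and in particular so do both $x$ and $x+M_*$. Thus $x\geq -s_f$ and $x+M_*\leq s_f$, so $M_*\leq 2s_f$, contradicting $M_*\geq 2s_f+1$. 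Hence $\tilde f_{\tau_x\mathbf n_*}=0$ for all $x$.

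I do not expect a real obstacle here: the separability assumption has already been consumed in producing the countable orthonormal basis, and everything left is a consequence of $\mu$ being product. The only things needing a little care are keeping the bookkeeping of the supports of shifted multi-indices straight and making the ``integrate out one coordinate'' step precise, both of which are routine given $\mu=\nu^{\Z}$. One mild point worth double-checking is that the argument for the first claim is symmetric in $x\geq 0$ and $x<0$ — it is, since the position-$x$ coordinate of $\tau_x\mathbf n_*$ equals $(\mathbf n_*)_0\neq 0$ regardless of the sign of $x$.
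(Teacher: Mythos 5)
Your proposal is correct and follows essentially the same route as the paper: the single observation that a mean-zero factor $\phi_{n_z}(\eta_z)$ sitting at a site $|z|\ge s_f+1$ outside the support of $f$ integrates out to zero, applied to the coordinate at position $x$ for the first claim and to the two extreme coordinates of the support of $\tau_x\mathbf{n}_*$ for the ``moreover'' part. The paper phrases the second step as a case split on $x\le -s_f-1$ versus $x\ge -s_f$ rather than your contradiction via $M_*\le 2s_f$, but the content is identical.
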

\begin{proof}
For $|x| \ge s_f+1$, $\phi_{{n_*}_0}(\eta_x)$ and $f$ are independent and $\langle \phi_{{n_*}_0}(\eta_x)\rangle =0$, so $\langle \phi_{{n_*}_0}(\eta_x) \Pi_{y \in \Z \setminus \{0\}} \phi_{{n_*}_y}(\eta_{x+y}) f\rangle =0.$

Similarly, if $\mathbf{n}_* \in \Theta_*$ satisfying ${n_*}_y \neq 0$ with some $|y| \ge 2s_f+1$, then $\phi_{{n_*}_0}(\eta_x)$ and $f$ are independent for $|x| \le -s_f-1$ and $\phi_{{n_*}_y}(\eta_{x+y})$ and $f$ are independent for $|x| \ge -s_f$ so we have $\tilde{f}_{\tau_x \mathbf{n}_*}=0$ for both cases.
\end{proof}

The next lemma is simple but one of the keys of our main result.

\begin{lemma}\label{lem:norm}
For $f \in \mathcal{D}_0, \|f\|^2=\sum_{\mathbf{n}_* \in \Theta_*} (\sum_{x \in \Z} \tilde{f}_{\tau_x \mathbf{n}_*})^2 $.
\end{lemma}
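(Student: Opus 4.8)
The plan is to compute $\|f\|^2 = \langle f | f \rangle = \sum_{x \in \Z} \langle f \tau_x f \rangle$ directly using the generalized Fourier expansion $f = \sum_{\mathbf{n} \in \Theta} \tilde f_{\mathbf{n}} \phi_{\mathbf{n}}$. First I would write $\langle f \tau_x f \rangle = \sum_{\mathbf{n}, \mathbf{m} \in \Theta} \tilde f_{\mathbf{n}} \tilde f_{\mathbf{m}} \langle \phi_{\mathbf{n}} \tau_x \phi_{\mathbf{m}} \rangle$, noting that $\tau_x \phi_{\mathbf{m}} = \phi_{\tau_x \mathbf{m}}$ and that by orthonormality of $\{\phi_{\mathbf{k}}\}_{\mathbf{k} \in \Theta}$ in $L^2(\mu)$ the inner product $\langle \phi_{\mathbf{n}} \phi_{\tau_x \mathbf{m}} \rangle$ equals $1$ when $\mathbf{n} = \tau_x \mathbf{m}$ and $0$ otherwise. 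Hence $\langle f \tau_x f \rangle = \sum_{\mathbf{m} \in \Theta} \tilde f_{\tau_x \mathbf{m}} \tilde f_{\mathbf{m}}$, and summing over $x$ gives $\|f\|^2 = \sum_{x \in \Z} \sum_{\mathbf{m} \in \Theta} \tilde f_{\tau_x \mathbf{m}} \tilde f_{\mathbf{m}}$.

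Next I would reorganize this double sum using the orbit decomposition of $\Theta \setminus \{\mathbf{0}\}$ under the shift: every $\mathbf{m} \neq \mathbf{0}$ is uniquely $\tau_y \mathbf{n}_*$ for some $y \in \Z$ and $\mathbf{n}_* \in \Theta_*$. The contribution of $\mathbf{m} = \mathbf{0}$ vanishes because $\tilde f_{\mathbf{0}} = \langle f \rangle = 0$ (here $f \in \mathcal{D}_0$). Substituting and relabeling, $\|f\|^2 = \sum_{\mathbf{n}_* \in \Theta_*} \sum_{y \in \Z} \sum_{x \in \Z} \tilde f_{\tau_{x+y} \mathbf{n}_*} \tilde f_{\tau_y \mathbf{n}_*}$; setting $z = x + y$ this becomes $\sum_{\mathbf{n}_* \in \Theta_*} \big(\sum_{z \in \Z} \tilde f_{\tau_z \mathbf{n}_*}\big)\big(\sum_{y \in \Z} \tilde f_{\tau_y \mathbf{n}_*}\big) = \sum_{\mathbf{n}_* \in \Theta_*} \big(\sum_{x \in \Z} \tilde f_{\tau_x \mathbf{n}_*}\big)^2$, which is the claimed identity.

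The one delicate point, and the step I would treat most carefully, is the absolute convergence needed to justify the rearrangements (interchanging the sum over $x$ with the sum over $\mathbf{m}$, and passing from indexing by $\mathbf{m}$ to indexing by $(y, \mathbf{n}_*)$). This is where Lemma \ref{lem:local} does the work: for each fixed $\mathbf{n}_*$ the sequence $(\tilde f_{\tau_x \mathbf{n}_*})_{x \in \Z}$ lies in $\ell^2_c$, so it has only finitely many nonzero terms (at most $2s_f + 1$ of them, and in fact the whole family is supported on finitely many orbits since $\tilde f_{\mathbf{m}} = 0$ unless $\mathbf{m}$ is supported in $\{-2s_f, \dots, 2s_f\}$, of which there are only finitely many). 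Consequently only finitely many $\mathbf{n}_* \in \Theta_*$ contribute, each with finitely many nonzero Fourier coefficients along its orbit, so every sum in sight is actually a finite sum and no convergence subtlety remains. I would state this reduction to finite sums explicitly at the outset, then carry out the elementary algebra above.
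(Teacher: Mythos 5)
Your argument is correct and is essentially the paper's own proof: both expand $f$ in the product orthonormal basis $\{\phi_{\mathbf{n}}\}_{\mathbf{n}\in\Theta}$, use orthonormality to reduce $\langle f\,\tau_z f\rangle$ to $\sum_{\mathbf{n}_*}\sum_x \tilde f_{\tau_x\mathbf{n}_*}\tilde f_{\tau_{x-z}\mathbf{n}_*}$, and then resum over $z$ along each shift-orbit. The only differences are cosmetic --- you expand $\langle f\,\tau_x f\rangle$ over all of $\Theta$ and pass to orbits afterward, whereas the paper groups $f$ into orbits first, and you justify the rearrangement by the finiteness coming from Lemma \ref{lem:local} while the paper invokes square-summability of the coefficients; both justifications are valid.
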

\begin{proof}
Since $\langle f\rangle =0$, $f_{\mathbf{0}}=0$. Then, by the general observation, $f=\sum_{\mathbf{n} \in \Theta \setminus \{ \mathbf{0}\} } \tilde{f}_{\mathbf{n}}\phi_{\mathbf{n}}=\sum_{\mathbf{n}_* \in \Theta_* } \sum_{x \in \Z} \tilde{f}_{\tau_x\mathbf{n}_*}\phi_{\tau_x\mathbf{n}_*}$. Then,
\begin{displaymath}
\|f\|^2=\sum_{z \in \Z} \langle f \tau_z f\rangle = \sum_{z \in \Z} \langle  \big(\sum_{\mathbf{n}_* \in \Theta_* } \sum_{x \in \Z} \tilde{f}_{\tau_x\mathbf{n}_*}\phi_{\tau_x\mathbf{n}_*} \big) \big( \sum_{\mathbf{n}_*' \in \Theta_* } \sum_{x' \in \Z} \tilde{f}_{\tau_{x'}\mathbf{n}_*'}\phi_{\tau_{x'+z}\mathbf{n}_*'} \big)\rangle .
\end{displaymath}
Since $\sum_{\mathbf{n}_* \in \Theta_* } \sum_{x \in \Z} \tilde{f}_{\tau_x\mathbf{n}_*}^2 < \infty$ and $\{ \phi_{\mathbf{n}}\}_{\mathbf{n} \in \Theta}$ is an orthonormal basis, we have
\begin{displaymath}
\langle  \big(\sum_{\mathbf{n}_* \in \Theta_* } \sum_{x \in \Z} \tilde{f}_{\tau_x\mathbf{n}_*}\phi_{\tau_x\mathbf{n}_*} \big) \big( \sum_{\mathbf{n}_*' \in \Theta_* } \sum_{x' \in \Z} \tilde{f}_{\tau_{x'}\mathbf{n}_*'}\phi_{\tau_{x'+z}\mathbf{n}_*'} \big)\rangle  = \sum_{\mathbf{n}_* \in \Theta_* } \sum_{x \in \Z} \tilde{f}_{\tau_x\mathbf{n}_*}\tilde{f}_{\tau_{x-z} \mathbf{n}_*}.
\end{displaymath}
Therefore, 
\[
\|f\|^2=\sum_{z \in \Z}\sum_{\mathbf{n}_* \in \Theta_* } \sum_{x \in \Z} \tilde{f}_{\tau_x\mathbf{n}_*}\tilde{f}_{\tau_{x-z} \mathbf{n}_*}= \sum_{\mathbf{n}_* \in \Theta_* }\big(\sum_{x \in \Z} \tilde{f}_{\tau_x\mathbf{n}_*} \big)^2.
\]
\end{proof}

\begin{proposition}
If $f \in C_0$, then $f \in \mathfrak{G}$.
\end{proposition}
\begin{proof}
By Lemma \ref{lem:norm}, $\|f\|=0$ implies $\sum_{x \in \Z} \tilde{f}_{\tau_x\mathbf{n}_*}=0$ for any $\mathbf{n}_* \in \Theta_*$. Then, combining the fact that $\phi_{\mathbf{n}_*} \in \mathcal{D}_0$ for each $\mathbf{n}_* \in \Theta_*$ with Lemma \ref{lem:grad}, for each fixed $\mathbf{n}_* \in \Theta_*$, there exists $ g_{\mathbf{n}_*}  \in \mathcal{D}_0$ such that $\sum_{x \in \Z} \tilde{f}_{\tau_x\mathbf{n}_*}\tau_x \phi_{\mathbf{n}_*}=\tau g_{\mathbf{n}_*}-g_{\mathbf{n}_*}$. Moreover, since $\{ \tau_x \phi_{\mathbf{n}_*}\}_{x \in \Z}$ are orthogonal and $\tilde{f}_{\tau_x\mathbf{n}_*}=0$ for $|x| \ge s_f+1$ by Lemma \ref{lem:local}, 
\[
\langle g_{\mathbf{n}_*}^2\rangle  \le (2s_f+1)^2 \sum_{x \in \Z} \tilde{f}_{\tau_x\mathbf{n}_*}^2 \langle \phi_{\mathbf{n}_*}^2\rangle =(2s_f+1)^2 \sum_{x \in \Z} \tilde{f}_{\tau_x\mathbf{n}_*}^2.
\]

 By the construction, $\{g_{\mathbf{n}_*}\}_{\mathbf{n}_*} $ are orthogonal in $L^2(\mu)$ and so $g:=\sum_{\mathbf{n}_* \in \Theta_* } g_{\mathbf{n}_*} \in L^2(\mu)$ since
\[
\langle g^2\rangle =\sum_{\mathbf{n}_* \in \Theta_* }\langle g_{\mathbf{n}_*}^2\rangle  \le (2s_f+1)^2 \sum_{\mathbf{n}_* \in \Theta_* } \sum_{x \in \Z} \tilde{f}_{\tau_x\mathbf{n}_*}^2 =(2s_f+1)^2\langle f^2\rangle .
\]
Also, $\langle g\rangle =0$. The locality of $g$ follows from the following two facts: (i) $g_{\mathbf{n}_*}=0$ if $\mathbf{n}_* \in \Theta_*$ satisfying ${n_*}_y \neq 0$ with some $|y| \ge 2s_f+1$ by Lemma \ref{lem:local}, (ii) the support of $g_{\mathbf{n}_*}$ is included in the union of the support of $\{\tau_x \phi_{\mathbf{n}_*}\}_{-s_f \le x \le s_f}$. Therefore, $g \in \mathcal{D}_0$.

Finally, we see that since 
\[
f=\sum_{\mathbf{n}_* \in \Theta_* } \sum_{x \in \Z} \tilde{f}_{\tau_x\mathbf{n}_*}\phi_{\tau_x\mathbf{n}_*}= \sum_{\mathbf{n}_* \in \Theta_* } \sum_{x \in \Z} \tilde{f}_{\tau_x\mathbf{n}_*}\tau_x \phi_{\mathbf{n}_*}= \sum_{\mathbf{n}_* \in \Theta_* }(\tau g_{\mathbf{n}_*} - g_{\mathbf{n}_*}) = \tau g -g 
\]
which implies $f \in \mathfrak{G}$.

\end{proof}


\subsection{Multi-dimensional setting}

In this subsection, we generalize our result to the multi-dimensional setting. 

Let $(X, \mathcal{F},\nu)$ be an probability space where $L^2(\nu)$ is separable and $\Omega:=X^{\Z^d}$ be the infinite product probability space equipped with the probability measure $\mu:=\nu^{\Z^d}$. 
 
Let $\mathcal{D}_0:=\{f \in L^2(\mu); f : \textit{local}, \langle f\rangle =0 \} $. For $f \in \mathcal{D}_0$, we define a semi-norm $\| \cdot \|$ as
\begin{displaymath}
\| f \|^2:=\lim_{k \to \infty}\frac{1}{(2k+1)^d} \langle (\sum_{|x| \le k}\tau_xf)^2\rangle =\sum_{x \in \Z^d} \langle f \tau_x f\rangle = \langle f | f\rangle.
\end{displaymath}

Recall that 
\[
\mathfrak{G}:=\{ \sum_{\a=1}^d (\tau^{\a} g_{\a}- g_{\a}); g_{\a} \in \mathcal{D}, \a=1,2,\dots,d \}=\{ \sum_{\a=1}^d (\tau^{\a} g_{\a}- g_{\a}); g_{\a} \in \mathcal{D}_0, \a=1,2,\dots,d \}
\]
and $C_0:=\{f \in \mathcal{D}_0; \|f \|=0 \}$.

Let $\ell^2_c:=\{\mathbf{a}=(a_x)_{x \in \Z^d} \in \R^{\Z^d}; |\{x \in \Z^d; a_x \neq 0\}| < \infty \}$. For $\mathbf{a} \in \ell^c_2$ satisfying $\mathbf{a} \nequiv \mathbf{0}$, define $M_{\mathbf{a}}:=\max_{1 \le \a \le d} \max\{ x \in \Z; \exists a_y \neq 0 \textit{ s.t. } y_{\a}=x\}$ and $m_{\mathbf{a}}:=\min_{1 \le \a \le d} \min\{ x \in \Z; \exists a_y \neq 0 \textit{ s.t. } y_{\a}=x\}$. As a convention, take $M_{\mathbf{0}}=m_{\mathbf{0}}
=0$. We also define $\Lambda_{\mathbf{a}}:=\{ x \in \Z^d; m_{\mathbf{a}} \le x_{\a} \le M_{\mathbf{a}}, \a=1,2,\dots,d \}$ and $s_{\mathbf{a}}:= |\Lambda_{\mathbf{a}}|$. Here, the only essential property is that the hypercube $\Lambda_{\mathbf{a}}$ satisfies $\Lambda_{\mathbf{a}} \supset \{x \in \Z^d; a_x \neq 0\}$.

\begin{lemma}\label{lem:gradhigh}
Let $f \in \mathcal{D}_0$ and assume that there exists $(a_x)_{x \in \Z^d} \in \ell^2_c$ and $h \in \mathcal{D}_0$ satisfying $f=\sum_{x \in \Z^d} a_x \tau_x h$ and $\sum_{x \in \Z^d} a_x=0$. Then, there exists a $d$-tuple of functions $(g_1,g_2,\dots,g_d) \in (\mathcal{D}_0)^d$ such that $f=\sum_{\a=1}^d (\tau^{\a} g_{\a} -g_{\a}) $, hence $f \in \mathfrak{G}$. Moreover, if $\{ \tau_x h\}_{x \in \Z^d}$ are orthogonal in $L^2(\mu)$, then we can find such a $d$-tuple of functions $(g_1,g_2,\dots,g_d)$ which satisfy also $\sum_{\a=1}^d \langle g_{\a}^2\rangle  \le  s_{\mathbf{a}}^2 \sum_{x \in \Z^d}a_x^2 \langle h^2\rangle $.
\end{lemma}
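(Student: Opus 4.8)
The plan is to reproduce the telescoping identity behind Lemma~\ref{lem:grad}, but carried out along a lattice path in $\Z^d$. Since $\sum_x a_x=0$, for the ``lower corner'' $x_0:=(m_{\mathbf{a}},\dots,m_{\mathbf{a}})$ of $\Lambda_{\mathbf{a}}$ we may write $f=\sum_x a_x\tau_x h=\sum_x a_x(\tau_x h-\tau_{x_0}h)$, and every $x$ with $a_x\neq0$ lies in $\Lambda_{\mathbf{a}}$, hence has all coordinates $\ge m_{\mathbf{a}}$. This lets us join $x_0$ to such an $x$ by the monotone ``staircase'' path that first raises coordinate $1$ from $m_{\mathbf{a}}$ to $x_1$, then coordinate $2$ up to $x_2$, and so on, staying inside $\Lambda_{\mathbf{a}}$. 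Telescoping along this path rewrites $\tau_x h-\tau_{x_0}h$ as a sum of elementary increments $(\tau^{\a}-1)(\tau_z h)$, one per unit edge, the direction-$\a$ edge being based at a point $z=(x_1,\dots,x_{\a-1},t,m_{\mathbf{a}},\dots,m_{\mathbf{a}})$ with $m_{\mathbf{a}}\le t<x_{\a}$.

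Collecting, for each fixed $\a$, all the terms $a_x\,\tau_z h$ arising from direction-$\a$ edges defines the candidate
\[
g_{\a}\;:=\;\sum_{x} a_x \sum_{t=m_{\mathbf{a}}}^{x_{\a}-1}\tau_{(x_1,\dots,x_{\a-1},\,t,\,m_{\mathbf{a}},\dots,m_{\mathbf{a}})}\,h ,
\]
and the telescoping identity then reads exactly $f=\sum_{\a=1}^d(\tau^{\a}g_{\a}-g_{\a})$. Each $g_{\a}$ is a finite real linear combination of translates of $h$, hence local, and $\langle g_{\a}\rangle=0$ since $\langle h\rangle=0$; thus $g_{\a}\in\mathcal{D}_0$ and $f\in\mathfrak{G}$. (For $d=1$ this reduces to the $g$ of Lemma~\ref{lem:grad}.)

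For the $L^2$ bound under the orthogonality hypothesis, regroup $g_{\a}=\sum_w c^{\a}_w\,\tau_w h$, where $c^{\a}_w$ is the sum of those $a_x$ whose direction-$\a$ staircase edge is based at $w$; explicitly $c^{\a}_w=0$ unless $w_{\b}=m_{\mathbf{a}}$ for all $\b>\a$, in which case $c^{\a}_w=\sum\{a_x:\ x_{\b}=w_{\b}\ (\b<\a),\ x_{\a}>w_{\a}\}$, a sum over a sub-box $S^{\a}_w\subseteq\Lambda_{\mathbf{a}}$. Orthogonality of $\{\tau_x h\}_{x}$ gives $\langle g_{\a}^2\rangle=\sum_w(c^{\a}_w)^2\langle h^2\rangle$, and Cauchy--Schwarz gives $(c^{\a}_w)^2\le|S^{\a}_w|\sum_{x\in S^{\a}_w}a_x^2\le s_{\mathbf{a}}\sum_{x\in S^{\a}_w}a_x^2$. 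Summing over $w$ and $\a$ and interchanging the order of summation,
\[
\sum_{\a=1}^d\langle g_{\a}^2\rangle\;\le\;s_{\mathbf{a}}\,\langle h^2\rangle\Bigl(\sum_x a_x^2\Bigr)\cdot\max_{x\in\Lambda_{\mathbf{a}}}\#\{(\a,w):x\in S^{\a}_w\}.
\]
The multiplicity equals $\sum_{\a=1}^d(x_{\a}-m_{\mathbf{a}})\le d(L-1)$, where $L:=M_{\mathbf{a}}-m_{\mathbf{a}}+1$ is the side length of $\Lambda_{\mathbf{a}}$, so $s_{\mathbf{a}}=L^d$; since $d\le 2^{d-1}\le L^{d-1}$ when $L\ge2$, this is $\le L^d=s_{\mathbf{a}}$, while if $L=1$ then $\sum_x a_x=0$ forces $\mathbf{a}=\mathbf{0}$ and the assertion is vacuous. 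Hence $\sum_{\a}\langle g_{\a}^2\rangle\le s_{\mathbf{a}}^2\,\langle h^2\rangle\sum_x a_x^2$, as claimed.

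The telescoping bookkeeping and the locality/mean-zero checks are routine; the only point requiring care is the last estimate, where the naive Cauchy--Schwarz bound overshoots $s_{\mathbf{a}}^2$ by a factor of order $d$, and one recovers the stated constant only by double-counting the multiplicities (and disposing of the degenerate cube $L=1$). I expect this constant-tracking, rather than any conceptual difficulty, to be the main obstacle.
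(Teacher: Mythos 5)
Your proof is correct, but it follows a genuinely different route from the paper's. The paper reduces the multi-dimensional case to the one-dimensional telescoping of Lemma \ref{lem:grad} by first constructing a Hamiltonian path $\psi:\{1,\dots,s_{\mathbf{a}}\}\to\Lambda_{\mathbf{a}}$ with nearest-neighbour steps (the ``snake'' ordering, cited from Appendix~3, Lemma~4.9 of \cite{KL}), telescoping the partial sums $\sum_{l\ge k}a_{\psi(l)}$ along that single path, and sorting the $s_{\mathbf{a}}-1$ elementary increments by direction; the constant $s_{\mathbf{a}}^2$ then falls out because each step belongs to exactly one direction $\a$ and each coefficient is bounded by $s_{\mathbf{a}}\sum_x a_x^2$ via Cauchy--Schwarz. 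You instead anchor everything at the corner $x_0$ and telescope each $\tau_xh-\tau_{x_0}h$ along its own monotone staircase, which buys you a cleaner orthogonality step (all edges are positively oriented and each base point $w$ occurs once, so no splitting of $g_\a$ into two orthogonal pieces is needed) and avoids invoking the Hamiltonian-path construction altogether; the price is the multiplicity count $\sum_\a(x_\a-m_{\mathbf{a}})\le d(L-1)\le L^d=s_{\mathbf{a}}$, which you handle correctly, including the degenerate case $L=1$ (where $\sum_xa_x=0$ forces $\mathbf{a}=\mathbf{0}$) and the elementary inequality $d\le 2^{d-1}\le L^{d-1}$ for $L\ge2$. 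Both arguments land on the same constant $s_{\mathbf{a}}^2$, and your $d=1$ specialization does agree with the $g$ of Lemma \ref{lem:grad} (the extra boundary term has coefficient $\sum_{y}a_y=0$).
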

\begin{proof}
The strategy of the proof is same as the proof of Lemma \ref{lem:grad}. We first construct a one-to-one function $\psi: \{1,2,\dots,s_{\mathbf{a}}\} \to \La_{\mathbf{a}} $ satisfying the property that $|\psi(k)-\psi(k+1)|=1$ for any $1 \le k \le s_{\mathbf{a}}-1$. This is done explicitly and elementary (cf. Appendix 3 Lemma 4.9 in \cite{KL}). Then, repeat the technique used in Lemma \ref{lem:grad} as follows:
\begin{align*}
& f=\sum_{x} a_x \tau_x h=\sum_{x \in \La_{\mathbf{a}} }a_x \tau_x h  = \sum_{k=1}^{s_{\mathbf{a}}}a_{\psi(k)}\tau_{\psi(k)} h\\
&=a_{\psi(s_{\mathbf{a}})} (\tau_{\psi(s_{\mathbf{a}})} h - \tau_{\psi(s_{\mathbf{a}}-1)} h)+ (a_{\psi(s_{\mathbf{a}})} + a_{\psi(s_{\mathbf{a}}-1)}) (\tau_{\psi(s_{\mathbf{a}}-1)} h - \tau_{\psi(s_{\mathbf{a}}-2)} h) + \dots \\
& +(a_{\psi(s_{\mathbf{a}})} + a_{\psi(s_{\mathbf{a}}-1)}\dots +  a_{\psi(1)})  \tau_{\psi(1)} h   \\
& = \sum_{k=2}^{s_{\mathbf{a}}} (\sum_{l=k}^{s_{\mathbf{a}}}  a_{\psi(l)}) (\tau_{\psi(k)} h -\tau_{\psi(k-1)}h) \\
& = \sum_{\a=1}^d \sum_{k=2}^{s_{\mathbf{a}}} (\sum_{l=k}^{s_{\mathbf{a}}}  a_{\psi(l)}) (\tau_{\psi(k)} h -\tau_{\psi(k-1)}h)(\mathbf{1}_{\{\psi(k)-\psi(k-1)=e_{\a} \}} + \mathbf{1}_{\{\psi(k)-\psi(k-1)=- e_{\a} \}}) \\
& = \sum_{\a=1}^d \sum_{k=2}^{s_{\mathbf{a}}} (\sum_{l=k}^{s_{\mathbf{a}}}  a_{\psi(l)}) (\tau_{\psi(k)} h -\tau_{\psi(k-1)}h)(\mathbf{1}_{\{\psi(k)= \tau^{\a} \psi(k-1) \}} + \mathbf{1}_{\{\psi(k-1)= \tau^{\a}\psi(k) \}}) \\
& = \sum_{\a=1}^d (\tau^{\a}g_{\a}-g_{\a})
\end{align*}
where 
\[
g_{\a}=\sum_{k=2}^{s_{\mathbf{a}}} (\sum_{l=k}^{s_{\mathbf{a}}}  a_{\psi(l)}) \tau_{\psi(k-1)} h \mathbf{1}_{\{\psi(k)= \tau^{\a} \psi(k-1) \}} - \sum_{k=2}^{s_{\mathbf{a}}} (\sum_{l=k}^{s_{\mathbf{a}}}  a_{\psi(l)}) \tau_{\psi(k)} h \mathbf{1}_{\{\psi(k-1)= \tau^{\a} \psi(k) \}}.
\]
Note that, for any $k$, $\psi(k)= \tau^{\a} \psi(k-1)$ implies $\psi(k-2) \neq \tau^{\a} \psi(k-1)$ since $\psi$ is one-to-one. Namely, if $\{ \tau_x h \}_{x \in \Z^d}$ are orthogonal in $L^2(\mu)$, then the two terms in the definition of $g_{\a}$ are orthogonal and
\begin{align*}
\langle g_{\a}^2 \rangle & = \sum_{k=2}^{s_{\mathbf{a}}}  \langle (\sum_{l=k}^{s_{\mathbf{a}}}  a_{\psi(l)})^2 (\tau_{\psi(k-1)} h)^2 \mathbf{1}_{\{\psi(k)= \tau^{\a} \psi(k-1) \}} \rangle \\
& + \sum_{k=2}^{s_{\mathbf{a}}} \langle  (\sum_{l=k}^{s_{\mathbf{a}}}  a_{\psi(l)})^2 (\tau_{\psi(k)} h)^2 \mathbf{1}_{\{\psi(k-1)= \tau^{\a} \psi(k) \}} \rangle \\
& =  \sum_{k=2}^{s_{\mathbf{a}}} (\mathbf{1}_{\{\psi(k)= \tau^{\a} \psi(k-1) \}} + \mathbf{1}_{\{\psi(k-1)= \tau^{\a} \psi(k) \}}) (\sum_{l=k}^{s_{\mathbf{a}}}  a_{\psi(l)})^2  \langle h^2  \rangle \\
& \le \sum_{k=2}^{s_{\mathbf{a}}}(\mathbf{1}_{\{\psi(k)= \tau^{\a} \psi(k-1) \}} + \mathbf{1}_{\{\psi(k-1)= \tau^{\a} \psi(k) \}})  s_{\mathbf{a}} \sum_{l=k}^{s_{\mathbf{a}}}  (a_{\psi(l)})^2  \langle h^2  \rangle \\
& \le   s_{\mathbf{a}} \sum_{x \in \Z^d} a_x^2 \langle h^2  \rangle \sum_{k=2}^{s_{\mathbf{a}}}(\mathbf{1}_{\{\psi(k)= \tau^{\a} \psi(k-1) \}} + \mathbf{1}_{\{\psi(k-1)= \tau^{\a} \psi(k) \}}).  
\end{align*}
Then, since $\sum_{\a=1}^d (\mathbf{1}_{\{\psi(k)= \tau^{\a} \psi(k-1) \}} + \mathbf{1}_{\{\psi(k-1)= \tau^{\a} \psi(k) \}}) =1$ for any $k$, we complete the proof.
\end{proof} 
 
\begin{remark}
The uniqueness result in Lemma \ref{lem:grad} fails for the multi-dimensional case. Note that we do not use the uniqueness result anywhere in the paper.
\end{remark}

For the part of the generalize Fourier series, we do not need to change the strategy. Note that we define $\Theta_*$ as the quotient of $\Theta \setminus \{\mathbf{0}\}$ by the equivalence relation $\mathbf{n} \sim \mathbf{n}'$ if any only if there exists $x \in \Z^d$ such that $\tau_x \mathbf{n}=\mathbf{n}'$.

To make clear the locality of the Fourier series, we introduce the following notation.

For $\mathbf{n}_* \in \Theta_*$, let $\text{rad}(\mathbf{n}_*)= \max_{1 \le \a \le d} \max\{ |x_{\a}-x'_{\a}|; {n_*}_x \neq 0, {n_*}_{x'} \neq 0 \}$. 

\begin{lemma}\label{lem:local}
For $\mathbf{n}_* \in \Theta_*$ satisfying $\text{rad}(\mathbf{n}_*) \ge 2s_f+1$, $\tilde{f}_{\tau_x \mathbf{n}_*}=0$ for all $x \in \Z^d$. 

Moreover, if $\text{rad}(\mathbf{n}_*) \le 2s_f$, then we can choose the representative $\mathbf{n}_*$ so as $\{ x \in \Z^d; {n_*}_x \neq 0 \} \subset \{x \in \Z^d; -s_f \le x_{\a} \le  s_f, \a=1,2,\dots,d \} $. Then, for this representative, $\tilde{f}_{\tau_x \mathbf{n}_*}=0$ if $|x| \ge s_f+1$.
\end{lemma}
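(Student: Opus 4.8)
The plan is to deduce both parts from a single elementary observation about the product structure of $\mu$, just as in the one-dimensional version of the lemma; the only new ingredient is a careful choice of representative of the class $\mathbf{n}_*\in\Theta_*$. Fix a representative and write $K_*:=\{z\in\Z^d:\,{n_*}_z\neq0\}$ for its support, so that $\phi_{\tau_x\mathbf{n}_*}=\prod_{z\in K_*}\phi_{{n_*}_z}(\eta_{z+x})$ depends precisely on the variables $(\eta_y)_{y\in x+K_*}$, and each factor $\phi_{{n_*}_z}$ equals some $\phi_k$ with $k\ge1$, hence is orthogonal to $\phi_0\equiv1$ and has zero mean under $\nu$.

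I would first record the following sublemma: \emph{if $x+K_*\not\subset\{y\in\Z^d:\,|y|\le s_f\}$, then $\tilde f_{\tau_x\mathbf{n}_*}=0$}. Indeed, pick $z_0\in K_*$ with $z_0+x\notin\{|y|\le s_f\}$. The factor $\phi_{{n_*}_{z_0}}(\eta_{z_0+x})$ is a function of $\eta_{z_0+x}$ alone; since the sites $\{z+x:z\in K_*\}$ are pairwise distinct and $f$ depends only on $(\eta_y)_{|y|\le s_f}$, this factor is, under the product measure $\mu$, independent of $f$ and of $\prod_{z\in K_*\setminus\{z_0\}}\phi_{{n_*}_z}(\eta_{z+x})$; factorising $\langle f\,\phi_{\tau_x\mathbf{n}_*}\rangle$ and using that $\phi_{{n_*}_{z_0}}$ has $\nu$-mean zero gives $\tilde f_{\tau_x\mathbf{n}_*}=0$. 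The first assertion now follows: $x+K_*$ has the same $\ell^\infty$-diameter as $K_*$, namely $\text{rad}(\mathbf{n}_*)$, whereas $\{|y|\le s_f\}$ has $\ell^\infty$-diameter $2s_f$, so $x+K_*\subset\{|y|\le s_f\}$ is impossible for \emph{every} $x\in\Z^d$ as soon as $\text{rad}(\mathbf{n}_*)\ge2s_f+1$, whence $\tilde f_{\tau_x\mathbf{n}_*}=0$ for all $x$.

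For the second assertion, assume $\text{rad}(\mathbf{n}_*)\le2s_f$. Since $\text{rad}$ depends only on the class, for each coordinate direction $\a$ the set $\{z_\a:z\in K_*\}$ has extent $\le2s_f$, so it can be translated into $[-s_f,s_f]$; because that interval has length $2s_f$ there is enough slack to arrange, simultaneously for all $\a$ by a single shift of $\Z^d$, that in addition $\{z_\a:z\in K_*\}$ contains some value $\le0$ and some value $\ge0$. Fix this representative; in particular $K_*\subset\{|y|\le s_f\}$. Now let $|x|\ge s_f+1$ and pick $\a$ with $|x_\a|\ge s_f+1$: if $x_\a\ge s_f+1$ take $z_0\in K_*$ with $(z_0)_\a\ge0$, and if $x_\a\le-(s_f+1)$ take $z_0\in K_*$ with $(z_0)_\a\le0$. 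In either case $|(z_0+x)_\a|\ge s_f+1$, so $z_0+x\notin\{|y|\le s_f\}$, and the sublemma gives $\tilde f_{\tau_x\mathbf{n}_*}=0$.

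The step where I expect the real care to be needed is precisely this second choice of representative: an \emph{arbitrary} representative with support inside $\{|y|\le s_f\}$ does not suffice — one sitting against a face $\{z_\a=-s_f\}$ can be shifted a full $s_f+1$ into the cube with no site leaving it — so one must genuinely spend the length-$2s_f$ freedom to centre the support so that it meets both half-spaces $\{z_\a\ge0\}$ and $\{z_\a\le0\}$ for every $\a$; this is elementary one-dimensional interval bookkeeping. Everything else transcribes the $d=1$ argument verbatim. Together the two assertions give $(\tilde f_{\tau_x\mathbf{n}_*})_{x\in\Z^d}\in\ell^2_c$ for every $\mathbf{n}_*\in\Theta_*$, with the nonzero entries confined to $\{|x|\le s_f\}$; combined with Lemma~\ref{lem:gradhigh}, applied with $h=\phi_{\mathbf{n}_*}$ and $a_x=\tilde f_{\tau_x\mathbf{n}_*}$ (whose sum over $x$ vanishes when $\|f\|=0$), this will force the resulting functions $g_\a$ to be supported in $\{|y|\le2s_f\}$ uniformly in $\mathbf{n}_*$, hence local.
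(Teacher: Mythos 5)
Your proof is correct, and it follows the same route as the paper's one--dimensional argument (which is the only proof the paper gives; for the multi--dimensional statement it merely says the strategy is unchanged): factor out one mean--zero factor $\phi_{{n_*}_{z_0}}(\eta_{z_0+x})$ sitting at a site outside the support of $f$ and use independence under the product measure. Your sublemma is exactly the $1$--dimensional computation, and the first assertion follows correctly because a set of $\ell^\infty$--diameter at least $2s_f+1$ cannot be translated into the cube $\{|y|\le s_f\}$.

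The one place where you go beyond the paper is the choice of representative, and you are right that this care is genuinely needed: a representative whose support merely lies in $\{|y|\le s_f\}$ does \emph{not} suffice (e.g.\ a support concentrated near the face $\{z_\a=-s_f\}$ can be translated by $x$ with $x_\a=2s_f$ and still land inside the support of $f$, so the corresponding coefficient need not vanish). Your centring condition --- that for every $\a$ the projection of the support meets both $\{z_\a\le 0\}$ and $\{z_\a\ge 0\}$ --- is achievable by a single shift precisely because the per--coordinate extent is at most $2s_f$, and it is what makes the implication ``$|x|\ge s_f+1\Rightarrow\tilde f_{\tau_x\mathbf{n}_*}=0$'' go through. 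This repairs a small imprecision in the paper's statement (the $1$--dimensional convention, anchoring the leftmost support point at the origin, plays this role there and has no literal analogue stated in higher dimension). The closing remark about feeding the result into Lemma~\ref{lem:gradhigh} is consistent with how the paper uses the lemma.
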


The next lemma holds in the same way as the one-dimensional case.

\begin{lemma}\label{lem:norm}
For $f \in \mathcal{D}_0, \|f\|=\sum_{\mathbf{n}_* \in \Theta_*} (\sum_{x \in \Z^d} \tilde{f}_{\tau_x \mathbf{n}_*})^2 $.
\end{lemma}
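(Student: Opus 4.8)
The plan is to mimic exactly the argument given in the one-dimensional case (Lemma \ref{lem:norm} there), since the generalized Fourier expansion with respect to the orthonormal basis $\{\phi_{\mathbf{n}}\}_{\mathbf{n}\in\Theta}$ of $L^2(\mu)$ works identically over $\Z^d$. First I would note that because $\langle f\rangle=0$ we have $\tilde f_{\mathbf 0}=0$, so $f=\sum_{\mathbf{n}\in\Theta\setminus\{\mathbf 0\}}\tilde f_{\mathbf n}\phi_{\mathbf n}$, and I would regroup this sum according to the equivalence classes: every $\mathbf n\neq\mathbf 0$ is $\tau_x\mathbf n_*$ for a unique class representative $\mathbf n_*\in\Theta_*$ and a unique $x\in\Z^d$ (once a representative has been fixed in each class), giving $f=\sum_{\mathbf n_*\in\Theta_*}\sum_{x\in\Z^d}\tilde f_{\tau_x\mathbf n_*}\phi_{\tau_x\mathbf n_*}$. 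By Lemma \ref{lem:local} for each fixed $\mathbf n_*$ only finitely many coefficients $\tilde f_{\tau_x\mathbf n_*}$ are nonzero, and $\sum_{\mathbf n_*}\sum_x\tilde f_{\tau_x\mathbf n_*}^2=\langle f^2\rangle<\infty$, which is what is needed to interchange sums and scalar products freely.

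Next I would compute, for each shift $z\in\Z^d$,
\[
\langle f\,\tau_z f\rangle=\Big\langle\Big(\sum_{\mathbf n_*}\sum_{x}\tilde f_{\tau_x\mathbf n_*}\phi_{\tau_x\mathbf n_*}\Big)\Big(\sum_{\mathbf n_*'}\sum_{x'}\tilde f_{\tau_{x'}\mathbf n_*'}\phi_{\tau_{x'+z}\mathbf n_*'}\Big)\Big\rangle.
\]
Using orthonormality of $\{\phi_{\mathbf n}\}$, the only surviving terms are those with $\tau_x\mathbf n_*=\tau_{x'+z}\mathbf n_*'$, which forces $\mathbf n_*=\mathbf n_*'$ and $x'=x-z$, so
\[
\langle f\,\tau_z f\rangle=\sum_{\mathbf n_*\in\Theta_*}\sum_{x\in\Z^d}\tilde f_{\tau_x\mathbf n_*}\,\tilde f_{\tau_{x-z}\mathbf n_*}.
\]
Summing over $z\in\Z^d$ and using the definition $\|f\|^2=\sum_{z\in\Z^d}\langle f\,\tau_z f\rangle$, I would reindex the double sum over $x$ and $z$: for fixed $\mathbf n_*$, $\sum_{z}\sum_{x}\tilde f_{\tau_x\mathbf n_*}\tilde f_{\tau_{x-z}\mathbf n_*}=\big(\sum_{x}\tilde f_{\tau_x\mathbf n_*}\big)^2$, which gives the claimed identity $\|f\|^2=\sum_{\mathbf n_*\in\Theta_*}\big(\sum_{x\in\Z^d}\tilde f_{\tau_x\mathbf n_*}\big)^2$. (The statement as printed omits the square on $\|f\|$; I would state it as $\|f\|^2=\cdots$.)

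The only point requiring a little care — and the one I would single out as the main obstacle — is justifying the interchanges of the three infinite sums (over $\mathbf n_*$, over $x$, and over $z$) with the expectation. The sum over $x$ for fixed $\mathbf n_*$ is finite by the locality Lemma \ref{lem:local}, and the sum over $z$ has only finitely many nonzero terms as well since $\langle f\,\tau_z f\rangle=0$ once $|z|>2s_f$; the genuinely infinite sum is the one over $\mathbf n_*\in\Theta_*$, and absolute convergence there follows from $\sum_{\mathbf n_*}\sum_x\tilde f_{\tau_x\mathbf n_*}^2=\|f\|_{L^2(\mu)}^2<\infty$ together with Cauchy–Schwarz applied to $\big|\sum_x\tilde f_{\tau_x\mathbf n_*}\big|^2\le s_f'\sum_x\tilde f_{\tau_x\mathbf n_*}^2$ (with $s_f'$ a bound on the number of nonzero $x$, uniform in $\mathbf n_*$ by Lemma \ref{lem:local}). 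Everything else is a verbatim repetition of the one-dimensional proof with $\Z$ replaced by $\Z^d$.
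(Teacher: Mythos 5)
Your proof is correct and is essentially the paper's own argument: the paper states this multi-dimensional lemma without proof, remarking that it "holds in the same way as the one-dimensional case," and your write-up is precisely that one-dimensional proof transported to $\Z^d$, with the convergence justifications made explicit. You are also right that the printed statement omits the square on $\|f\|$; the intended identity is $\|f\|^2=\sum_{\mathbf{n}_*\in\Theta_*}\bigl(\sum_{x\in\Z^d}\tilde{f}_{\tau_x\mathbf{n}_*}\bigr)^2$.
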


Our main result also follows in the same way. Just note that $\sum_{x \in \Z^d}\tilde{f}_{\tau_x \mathbf{n}_*}\tau_x \phi_{\mathbf{n}_*}$ does not depend on the choice of the representative of $\mathbf{n}_*$.

\begin{proposition}
If $f \in C_0$, then $f \in \mathfrak{G}$.
\end{proposition}

Hence, we prove $C_0 \subset \mathfrak{G}$ and so Theorem \ref{thm:main}.

\section{Application to the stochastic energy transport model}\label{sec:energy}

In this section, we show an application of our result to one specific model called stochastic energy transport model, which is paid much attention from particularly physical point of view. See more detailed background of the model in \cite{GG08,GG09}.

The model is heuristically obtained as a mesoscopic energy transport model from a microscopic mechanical dynamics consist of a one-dimensional array of two-dimensional cells, each containing a single hard-disc particle or an array of three-dimensional cells, each containing a single hard-sphere particle. 

This mesoscopic model completely fits to our general setting taking $(X,\F,\mu)=((0,\infty),\mathcal{B}((0,\infty),\nu)$ where
\begin{equation}\label{eq:equilibriummeasure}
\nu(d\eta)=\frac{\eta^{\frac{d}{2}-1}\exp(-\frac{\eta}{T})}{T^{\frac{d}{2}}\Gamma(\frac{d}{2})}d\eta
\end{equation}
with a given model parameter $d$ and the temperature $T$.

The operator $L$ is the generator of the infinite volume dynamics, given as $Lf=\sum_{x \in \Z}(L_{x,x+1}+L_{x+1,x})f$ where
\[
L_{x,x+1}f=\frac{1}{2}\int_{-\eta_{x+1}}^{\eta_x} du[W(\eta_x-\eta,\eta_{x+1}+u| \eta_x,\eta_{x+1})f(\dots,\eta_x-u,\eta_{x+1}+u, \dots) -f(\eta)]
\]
and $L_{x,x+1}=L_{x+1,x}$
where $W(\eta_a,\eta_b| \eta_a-u,\eta_b+u)$ describes the rate of exchange of energy $u$ between sites $a$ and $b$ at respective energies $\eta_a$ and $\eta_b$. In other words, in this dynamics, the amount of energy $u$ is moved between the neighboring sites $a$ and $b$ with rate $W(\eta_a,\eta_b| \eta_a-u,\eta_b+u)$. The specific forms of the kernel should be found in \cite{GG08,GG09}. The dynamics obviously conserves the sum of the energies, hence $\xi(\eta)=\eta$.

Under the diffusive space-time scaling limit, the time evolution of the local temperature will be given by 
\begin{equation}\label{eq:Fourier}
\partial_t T=\partial_x (D(T)\partial_x T), \quad  T=T(x,t)
\end{equation}
with thermal diffusivity $D(T)$. In \cite{GG08,GG09}, the authors conjectured that $D(T)=D^s(T)$ where $D^s(T)$ is the static part of the thermal diffusivity. However, with our main result, $D(T)=D^s(T)$ implies the energy current is the gradient and it is not true, hence we conclude that the conjecture fails.
Recently, Gilbert and Gaspard \cite{GG16}
show how the variational characterization can be put to use to obtain the correction to static (or instantaneous) part of the diffusion coefficient and carried out further molecular dynamics simulations, which on one side confirm our picture and on the other hand also show that the correction is very small.

\section*{Acknowledgement}

The author expresses her sincere thanks to Professor Herbert Spohn and Professor Stefano Olla for their insightful discussions and encouragement.

\end{document}